\newtheorem{theorem}{Theorem}[section]
\newtheorem{lemma}[theorem]{Lemma}
\newtheorem{proposition}[theorem]{Proposition}
\newtheorem{corollary}[theorem]{Corollary}
\newtheorem{example}[theorem]{Example}
\newtheorem{definition}[theorem]{Definition}
\newtheorem{method}{Method}
\newcommand{\Z}{\mathbb{ Z}}
\renewcommand{\S}{\mathfrak{S}}
\newcommand{\divides}{{\Big|}}
\newcommand{\Stab}{\mbox{Stab}}
\newcommand{\mo}{\mbox{\scriptsize $(-1)$}}
\author{Ian Hawthorn \and Yue Guo}
\title{Arbitrary Functions in Group Theory}
\subjclass[2010]{Primary: 20D99}
\begin{document}
\begin{abstract} Two measures of how near an arbitrary function between 
groups is to being a homomorphism are considered. These have properties 
similar to conjugates and commutators. The authors show that there is a 
rich theory based on these structures, and that this theory can be used to 
unify disparate approaches such as group cohomology and the transfer and to
prove theorems. The proof of the Schur-Zassenhaus theorem is recast in this 
context. We also present yet another proof of Cauchy's theorem and 
a very quick approach to Sylow's theorem.
\end{abstract}
\maketitle
\section{Introduction}

Consider an arbitrary function \( f : G \longrightarrow H \) between 
finite groups. Unless the function is a homomorphism it will fail to 
preserve group structure. However intuitively some non-homomorphisms 
`almost' preserve group structure while others completely scramble it. 
We consider measures of how nearly the group structure is preserved by 
an arbitrary function.

To see why this might be useful consider the function 
\( \mo : g \mapsto g^{-1} \) defined on a group \( G \). 
This function is a homomorphism if and only if \( G \) is abelian. 
Hence a measure of how nearly \( \mo \) preserves group structure will be
a measure of how near \( G \) is to being abelian. There are two basic 
structures which explore the extent of non-commutativity in a group. 
These are the commutator and the conjugate. Both are fundamentally
useful concepts in the study of finite groups.  We will find analogues 
of both in the theory of arbitrary functions and explore their properties.

\medskip

Our main objective in this paper is to show that arbitrary function techniques
may be useful in group theory. Many theorems in group theory can be expressed
as the statement that a homomorphism with certain properties exists. These
theorems can be approached by looking at a set of arbitrary 
functions with the desired properties, and then applying techniques from 
arbitrary function theory to hopefully locate or construct a homomorphism 
in the set.  We explore several different techniques for locating or 
constructing homomorphisms in a set of arbitrary functions with the aim of 
producing proofs of important results in group theory using this approach.

\section{The function action and Cauchy's theorem}

\begin{definition}
Consider an arbitrary function \( f : G \longrightarrow H \) between 
finite groups and let \( a \in G \). Define a new function
\[ f^a(x) = f(a)^{-1}f(ax) \]
\end{definition}

Clearly if \( f \) is a group homomorphism then \( f^a = f \). Conversely if 
\( f^a = f \) for all \( a \in G \) then \( f \) is a homomorphism.
We will call \( f^a \) the {\bf conjugate} of \( f \) by \( a \) and the 
process of producing such a conjugate will be called {\bf function conjugation}.
The reuse of such a common term can be justified on two grounds. 

Firstly function conjugation is closely related to ordinary conjugation.

\begin{example}
Consider the function \( \mo : G \longrightarrow G \) defined by 
\( g \mapsto g^{-1} \). Then \( \mo^a(x) = ax^{-1}a^{-1} \).
\end{example}

Secondly function conjugation has very similar properties to ordinary 
conjugation.  It is easy to verify that \( (f^a)^b(x) = f^{ab}(x) \) and 
\( f^a(1) = 1 \).  An arbitrary function \( f:G\longrightarrow H \) is 
said to be {\bf identity preserving} if \( f(1) = 1 \). The map 
\( f \mapsto f^a \) projects the set of all functions from \( G \) to 
\( H \) onto the subset of identity preserving functions. The projection 
is onto since  if \( f \) is identity preserving then \( f^1(x) = f(x) \). 
These statements together prove that 

\begin{proposition}
The maps \( f \mapsto f^{(a^{-1})} \) define a group action of \( G \) on 
the set of identity preserving functions from \( G \) to \( H \). 
Homomorphisms are precisely the functions which are invariant under this action.
\end{proposition}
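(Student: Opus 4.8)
The plan is to verify directly the two axioms for a left group action on the set $X$ of identity preserving functions $G \longrightarrow H$, using the three identities already recorded before the statement. First I would check that $X$ is an invariant domain: for \emph{any} function $f$ and any $a \in G$ one has $f^a(1) = f(a)^{-1}f(a\cdot 1) = 1$, so $f^a$ is identity preserving whether or not $f$ is; in particular the prescription $f \mapsto f^{(a^{-1})}$ really does send $X$ into $X$.

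Next, write $\phi_a \colon X \longrightarrow X$ for the map $f \mapsto f^{(a^{-1})}$. The identity axiom is immediate from $f^1 = f$ on $X$, since $\phi_1(f) = f^{(1^{-1})} = f^1 = f$. For compatibility with composition I would compute, using $(f^a)^b = f^{ab}$,
\[ \phi_a\bigl(\phi_b(f)\bigr) = \bigl(f^{(b^{-1})}\bigr)^{(a^{-1})} = f^{(b^{-1}a^{-1})} = f^{((ab)^{-1})} = \phi_{ab}(f). \]
This is exactly where the inverse in the definition earns its keep: the raw operation $f \mapsto f^a$ obeys $(f^a)^b = f^{ab}$, which is a \emph{right} action, so precomposing with $a \mapsto a^{-1}$ converts it into a genuine left action. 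Each $\phi_a$ then has two-sided inverse $\phi_{a^{-1}}$, hence is a bijection of $X$, and $a \mapsto \phi_a$ is a homomorphism from $G$ to the symmetric group on $X$.

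Finally, for the characterization of homomorphisms: an $f \in X$ is fixed by the action precisely when $\phi_a(f) = f$ for all $a \in G$, i.e. when $f^{(a^{-1})} = f$ for all $a$; as $a$ ranges over $G$ so does $a^{-1}$, so this is the same as saying $f^a = f$ for all $a \in G$, which by the remark following the definition of $f^a$ holds exactly when $f$ is a homomorphism. One should also note that every homomorphism satisfies $f(1)=1$ and so lies in $X$ to begin with. I do not anticipate a real obstacle here — all the content sits in the two bookkeeping identities $(f^a)^b = f^{ab}$ and $f^1 = f$, both already granted — so the only point demanding care is the left-versus-right convention that forces the inverse into the statement.
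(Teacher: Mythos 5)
Your verification is correct and follows essentially the same route the paper takes: it assembles the proposition from the identities \( (f^a)^b = f^{ab} \), \( f^a(1) = 1 \), \( f^1 = f \) on identity preserving functions, and the remark that \( f^a = f \) for all \( a \) characterizes homomorphisms, with the inverse in \( f \mapsto f^{(a^{-1})} \) converting the right-action rule into a left action. Your explicit note about the left-versus-right convention is a helpful elaboration of a point the paper leaves implicit, but it is not a different argument.
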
 

This suggests the following method by which one might demonstrate the 
existence of a homomorphism.

\begin{method} Consider a set of arbitrary functions and examine the orbits
under the function action. Use orbit counting and divisibility arguments to 
demonstrate the existence of an orbit of size 1.
\end{method}

Here is a proof of Cauchy's theorem demonstrating this approach.

\begin{theorem}[Cauchy]
If \( p \divides |G| \) then \( G \) has an element of order \( p \).
\end{theorem}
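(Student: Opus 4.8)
The plan is to use the method just described, taking the acting group to be the cyclic group \( \Z/p\Z \), which will play the role of the domain, and taking the codomain to be \( G \) itself. The first step is a translation: \( G \) has an element of order \( p \) if and only if there is a homomorphism \( \Z/p\Z \longrightarrow G \) other than the constant map at \( 1 \). Indeed, if \( \phi \) is such a homomorphism then \( \phi(1) \) has order dividing the prime \( p \) and is not the identity, hence has order exactly \( p \); conversely an element of order \( p \) in \( G \) generates a subgroup isomorphic to \( \Z/p\Z \), which gives the required homomorphism. So it is enough to locate a nonconstant homomorphism \( \Z/p\Z \to G \).

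To do this I would consider the set \( X \) of all identity preserving functions \( f : \Z/p\Z \longrightarrow G \). Such an \( f \) is forced to satisfy \( f(0) = 1 \) and is otherwise free, so \( |X| = |G|^{\,p-1} \). By the proposition on the function action, the assignments \( f \mapsto f^{(a^{-1})} \) for \( a \in \Z/p\Z \) define an action of \( \Z/p\Z \) on \( X \), and the fixed points of this action are exactly the homomorphisms \( \Z/p\Z \to G \). Every orbit therefore has size dividing \( |\Z/p\Z| = p \); since \( p \) is prime each orbit has size \( 1 \) or \( p \), so \( |X| \) is congruent modulo \( p \) to the number of homomorphisms \( \Z/p\Z \to G \).

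Finally I would bring in the hypothesis \( p \divides |G| \). Since \( p - 1 \geq 1 \) this gives \( p \divides |G|^{\,p-1} = |X| \), so the number of homomorphisms \( \Z/p\Z \to G \) is a multiple of \( p \). The constant map at \( 1 \) is one of them, so there are at least \( p \geq 2 \) of them, and in particular a nonconstant one exists; by the translation above this yields an element of order \( p \) in \( G \).

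There is no serious obstacle here: the argument is essentially McKay's cyclic-rotation proof of Cauchy's theorem, reorganised so that the rotation action becomes an instance of the function action. The only points needing attention are bookkeeping ones — one must keep \( \Z/p\Z \) in the role of the \emph{domain} (so that orbit sizes are governed by its order), and one must use \emph{both} hypotheses: primality of \( p \) to pin the orbit sizes down to \( 1 \) or \( p \), and \( p \divides |G| \) to make \( |X| \) divisible by \( p \).
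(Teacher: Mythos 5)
Your argument is correct and is essentially the paper's own proof: the same set of identity preserving functions \( \Z_p \to G \) of size \( |G|^{p-1} \), the same function action of \( \Z_p \) with orbits of size \( 1 \) or \( p \), and the same counting that forces a nontrivial homomorphism (hence an element of order \( p \)) to exist. The only cosmetic difference is that you phrase the count as a congruence modulo \( p \) on the number of fixed points, while the paper counts orbits of size \( 1 \) directly.
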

\begin{proof}
Consider the set \( \S \) of identity preserving functions from the
cyclic group \( \Z_p \) to \( G \). Then \( |\S| = |G|^{(p-1)} \) is
divisible by \( p \). The function action of \( \Z_p \) partitions
\( \S \) into orbits of size \( 1 \) or \( p \). The identity function 
is in \( \S \) and is a homomorphism, hence belongs to an orbit of 
size \( 1 \). Hence there must be at least \( p-1 \) other orbits of size
\( 1 \), and these are non-trivial homomorphisms from \( \Z_p \) to \( G \).
Any non-trivial element in the image of one of these has order \( p \)
proving the result.
\end{proof}

Of course Cauchy's theorem is elementary so finding yet another proof of
it is not a great achievement. Nevertheless it is encouraging to 
find such an immediate validation of the basic approach.  Furthermore the
proof is a rather nice one, well motivated by the search for a non-trivial 
homomorphism from \( \Z_p \) which will make it easier to explain to students 
than proofs which commence in less obvious ways.

\medskip

It is worthwhile considering whether our proof of Cauchy's theorem could be
extended to give a proof of Sylow's theorem. The answer is a qualified yes. 
We could construct such a proof by reasoning as follows.

Assume that \( H \le G \) is a \( p \)-subgroup with \( p \divides [G:H] \).
We can consider functions from \( \Z_p \) to \( G \) and define them to be 
equivalent if they generate the same map into cosets of \( H \). Orbit 
counting then gives a stabilised equivalence class and the image of this gives
an extension of \( H \). To make this argument work we must first show that 
the function action is well defined on equivalence classes which 
reduces to showing that \( p\divides [N_G(H):H] \). This is true
and not hard to prove, however once you have done this you can extend 
\( H \) by simply applying Cauchy's theorem to the quotient \( N_G(H)/H \) 
completely ignoring the fancy action on classes of arbitrary functions. 

While searching for a proof using arbitrary functions we seem to have tripped
over the following rather simple proof of Sylow's theorem, albeit one that 
does not use the methods which are the main focus of this paper.

\begin{theorem}[Sylow: Existence and Extension]
If \( H \) is a \( p \)-subgroup of a finite group \( G \) then 

\[ p \divides [G:H] \Rightarrow  p \divides [N_G(H):H] \Rightarrow
\mbox{ \( H \) is not a maximal \( p \)-subgroup.} \]
\end{theorem}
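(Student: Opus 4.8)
The plan is to prove the two implications separately: the first by counting fixed points of a $p$-group action, the second by a direct appeal to Cauchy's theorem, exactly as foreshadowed in the discussion above.

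For the first implication I would let $H$ act on the set of left cosets $G/H$ by left translation, $h\cdot(gH) = (hg)H$. Since $H$ is a $p$-group, every orbit of this action has size a power of $p$, so reducing modulo $p$ the size $[G:H]$ of $G/H$ is congruent to the number of fixed points. The essential step is to identify the fixed-point set: $gH$ is fixed by all $h \in H$ precisely when $g^{-1}hg \in H$ for every $h \in H$, i.e.\ when $g^{-1}Hg = H$ (equality holds since $G$ is finite), i.e.\ when $g \in N_G(H)$. Hence the fixed points are exactly the cosets of $H$ contained in $N_G(H)$, of which there are $[N_G(H):H]$. Therefore $[G:H] \equiv [N_G(H):H] \pmod p$, so $p \divides [G:H]$ forces $p \divides [N_G(H):H]$.

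For the second implication, observe that $H$ is a normal subgroup of $N_G(H)$, so the quotient $Q = N_G(H)/H$ is a group, and by hypothesis $p \divides |Q|$. Cauchy's theorem, already established, provides an element of order $p$ in $Q$ and hence a subgroup $\overline{H_1} \le Q$ with $|\overline{H_1}| = p$. Its preimage $H_1$ under the quotient map $N_G(H) \to Q$ is a subgroup of $G$ with $H \le H_1$ and $[H_1:H] = p$, so $|H_1| = p\,|H|$ is again a power of $p$. Thus $H_1$ is a $p$-subgroup strictly larger than $H$, and $H$ is not a maximal $p$-subgroup.

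The only point requiring care is the fixed-point computation that makes $N_G(H)$ appear, together with the standard fact that a $p$-group acting on a finite set has all orbit sizes a power of $p$ --- this is what licenses the congruence. Everything after that is routine use of the correspondence between subgroups of $N_G(H)/H$ and subgroups of $N_G(H)$ containing $H$, so I anticipate no real obstacle.
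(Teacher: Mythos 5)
Your argument is correct and is essentially identical to the paper's proof: the same fixed-point count for the left-translation action of \( H \) on the cosets \( \{gH\} \), identifying the stabilised cosets with those in \( N_G(H) \), followed by the same application of Cauchy's theorem to \( N_G(H)/H \) and pulling back the order-\( p \) subgroup. You merely spell out the congruence and the correspondence-theorem step that the paper leaves implicit.
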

\begin{proof} 
The action by left multiplication of \( H \) on the cosets \( \{ gH \} \) 
stabilises a non-zero multiple of \( p \) of them by orbit counting. 
A coset is stabilised by this action if and only if it lies in \( N_G(H) \) 
hence \( p \divides [N_G(H):H] \). Applying Cauchy's theorem to 
\( N_G(H)/H \) gives the larger \( p \)-subgroup we are seeking.
\end{proof} 

Note that in the case that \( G \) is a \( p \)-group we can conclude
from this that normalisers increase in a \( p \)-group. 

\section{The Average Function and the Transfer}
\label{average_function_section}
\medskip

Instead of trying to show that a homomorphism exists by orbit counting we
can instead try to build one. The most direct approach is to start with an 
arbitrary function and attempt to average out the function action.

\begin{method}  Consider an orbit of functions \( \{ f^{a_i} \} \) under the 
function action. Construct a new function by averaging this orbit. 
\end{method}

In order to make this method work we need to consider what the function action
does to a product of functions.

\begin{proposition}
Let \( f \) and \( g \) be functions from \( G \) to \( H \). Define
a new function \( f*g : G \longrightarrow H \) by \( (f*g)(x) = f(x)g(x) \).
Then 
\[ (f*g)^a(x) = \bigl(f^a(x)\bigr)^{g(a)}g^a(x) \]
In particular if \( H \) is abelian then \( (f*g)^a = f^a*g^a \).
\end{proposition}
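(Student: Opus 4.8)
The plan is to prove the identity by a direct computation: unwind the definition of function conjugation on $f*g$, and then regroup the resulting product so that a conjugation by $g(a)$ appears explicitly. First I would write
\[ (f*g)^a(x) = (f*g)(a)^{-1}(f*g)(ax) = \bigl(f(a)g(a)\bigr)^{-1} f(ax)g(ax) = g(a)^{-1}f(a)^{-1}f(ax)\,g(ax). \]
The key observation, read straight off the defining formula $f^a(x) = f(a)^{-1}f(ax)$, is that $f(ax) = f(a)f^a(x)$ and, likewise, $g(ax) = g(a)g^a(x)$. Substituting the expression for $g(ax)$ into the display above gives
\[ (f*g)^a(x) = g(a)^{-1}\bigl(f(a)^{-1}f(ax)\bigr)g(a)\,g^a(x) = g(a)^{-1}\, f^a(x)\, g(a)\, g^a(x), \]
which is precisely $\bigl(f^a(x)\bigr)^{g(a)}\,g^a(x)$, as asserted.

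The only point requiring care is the order of multiplication, since $H$ is not assumed abelian: the factor $g(a)$ coming from $(f*g)(a)^{-1} = g(a)^{-1}f(a)^{-1}$ must be matched against a compensating $g(a)$ extracted from $g(ax)$, so that the middle block $f(a)^{-1}f(ax)$ gets conjugated by $g(a)$ (and not by anything else), while the leftover $g^a(x)$ is left standing to the right. There is no genuine obstacle here — the statement is essentially a bookkeeping identity — so I would expect the whole argument to be a couple of lines.

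Finally, the abelian specialisation is immediate: if $H$ is abelian then conjugation in $H$ is trivial, so $\bigl(f^a(x)\bigr)^{g(a)} = f^a(x)$ and the formula collapses to $(f*g)^a(x) = f^a(x)\,g^a(x) = (f^a * g^a)(x)$ for all $x$, i.e. $(f*g)^a = f^a * g^a$.
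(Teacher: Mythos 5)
Your computation is correct: with the paper's convention $h^{k}=k^{-1}hk$, expanding $(f*g)^a(x)=g(a)^{-1}f(a)^{-1}f(ax)g(ax)$ and inserting $g(ax)=g(a)g^a(x)$ gives exactly $\bigl(f^a(x)\bigr)^{g(a)}g^a(x)$. The paper states this proposition without proof, and your direct unwinding of the definitions is precisely the intended verification, including the immediate abelian specialisation.
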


This leads directly and obviously to the following theorem

\begin{theorem}
\label{averagetheorem}
Let \( f : G \longrightarrow A \) be an arbitrary function into 
abelian \( A \) and let \( \{ f^{g_i} : i = 1 \ldots n \} \) be its 
orbit under the function action. Then the {\bf average function}
\( \overline{f} = 
	 f^{g_1}* f^{g_2} * \cdots * f^{g_n} : G \longrightarrow A \)
is a homomorphism from \( G \) to \( A \). 
\end{theorem}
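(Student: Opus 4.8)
The plan is to show directly that the average function $\overline{f}$ is invariant under the function action, and then to quote the observation (made just after the definition of $f^a$) that a function fixed by every $f \mapsto f^a$ is necessarily a homomorphism. So the whole proof reduces to verifying $\overline{f}^{\,a} = \overline{f}$ for every $a \in G$.

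To compute $\overline{f}^{\,a}$, I would first extend the Proposition preceding the theorem from two factors to $n$ factors: since $A$ is abelian, $(h_1 * h_2)^a = h_1^a * h_2^a$, and an easy induction then gives $(h_1 * \cdots * h_n)^a = h_1^a * \cdots * h_n^a$. Applying this to $\overline{f} = f^{g_1} * \cdots * f^{g_n}$ yields $\overline{f}^{\,a} = (f^{g_1})^a * \cdots * (f^{g_n})^a$, and then the composition law $(f^{g})^a = f^{\,ga}$ (noted in the excerpt together with $f^a(1)=1$) rewrites this as $\overline{f}^{\,a} = f^{\,g_1 a} * \cdots * f^{\,g_n a}$.

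The remaining point is that the list $(f^{\,g_1 a}, \ldots, f^{\,g_n a})$ is just a reordering of $(f^{\,g_1}, \ldots, f^{\,g_n})$. This is exactly because $\{ f^{g_i} \}$ is an \emph{orbit}: the map $h \mapsto h^a$ permutes the orbit (its inverse is $h \mapsto h^{(a^{-1})}$), so it carries the $n$ distinct functions $f^{\,g_i}$ bijectively onto themselves, i.e.\ $f^{\,g_i a} = f^{\,g_{\sigma(i)}}$ for some permutation $\sigma$ of $\{1,\dots,n\}$. Since $A$ is abelian the operation $*$ is commutative (pointwise $f(x)g(x) = g(x)f(x)$), so the product is unchanged under this reordering, giving $\overline{f}^{\,a} = f^{\,g_{\sigma(1)}} * \cdots * f^{\,g_{\sigma(n)}} = f^{\,g_1} * \cdots * f^{\,g_n} = \overline{f}$. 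As $a$ was arbitrary, $\overline{f}$ is action-invariant and hence a homomorphism; note also $\overline{f}(1) = \prod_i f^{g_i}(1) = 1$, consistent with its being identity preserving.

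I do not expect any serious obstacle here: the two substantive ingredients — that $*$ intertwines with the action in the abelian case, and that the action permutes an orbit — are already available, and the only thing to be slightly careful about is the bookkeeping in the permutation step (that the $f^{\,g_i}$ are genuinely the distinct orbit elements, so that $h \mapsto h^a$ really is a bijection of the multiset of factors). Everything else is a routine induction and the commutativity of pointwise multiplication in $A$.
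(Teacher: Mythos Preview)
Your proof is correct and is exactly the argument the paper has in mind: the theorem is stated as following ``directly and obviously'' from the preceding proposition that \((f*g)^a=f^a*g^a\) in the abelian case, and your write-up simply spells out this obvious step --- extend to \(n\) factors, use \((f^{g_i})^a=f^{g_i a}\), note that this permutes the orbit, and invoke invariance \(\Rightarrow\) homomorphism. There is nothing to add.
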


If \( f \) is a homomorphism then clearly \( \overline{f} = f \) as we 
would expect. We next observe that the transfer map is defined in precisely 
this way. 

Consider a group \( G \), a subgroup \( H \le G \) with \( [G:H] = n\); 
a homomorphism \( \pi : H \longrightarrow A \) into an abelian group;
and a collection of coset representatives \( \{ t_i : 1 = 1 \ldots n \} \) 
for the cosets \( \{ t_iH \} \) of \( H \) in \( G \). 
Assume that \( 1 \in \{ t_i \} \). Look at the function 
\( f : G \longrightarrow A \) defined by \( f(ht_i) = \pi(h) \). 

If \( h_0 \in H \) then \( f^{h_0}(ht_i) = f(h_0)^{-1}f(h_0ht_i) 
= \pi(h_0)^{-1}\pi(h_0h) = \pi(h) = f(ht_i) \) so \( f \) is stabilised 
by \( H \) under the function action. 
Hence \( \{ f^{t_i} : i = 1 \ldots n \} \) covers the orbit of \( H \) 
with multiplicity \( m = [\Stab_G(f):H] \).

Now \( f^{t_i}(x) = f(t_i)^{-1}f(t_ix) = t_ixt_{(i)x}^{-1} \) where
\( t_ix \in Ht_{(i)x} \). We have deliberately chosen our notation to 
match that on page 285 of Robinson's book \cite{Robinson} allowing us to
directly identify the product of these terms as the transfer homomorphism

\[ \theta^*(g) = \prod_{i=1}^n t_ixt_{(i)x}^{-1} = \prod_{i=1}^n f^{t_i}(x)
= \bigl(\overline{f}(x)\bigr)^m \]

So the transfer map is a power of the average function. Hence among other
things we can immediately conclude that it is a homomorphism. This is a very 
notationally clean approach to the transfer. We can also apply the method to 
other functions into abelian groups to obtain further homomorphisms.

\section{Distributors}

The function action measures the failure of a function to be a homomorphism
in the same way that the conjugation action measures a failure to commute.
We can also measure a failure to commute using commutators. In this section
we introduce a construction analogous to commutators which measures the 
extent to which an arbitrary function preserves group structure.

\begin{definition}
Consider an arbitrary function \( f : G \longrightarrow H \) between 
finite groups. Define the {\bf\( f \)-distributor} \( [x,y;f] \)
of \( x \) and \( y \) to be
\[ [x,y;f] =   f(y)^{-1}f(x)^{-1}f(xy) = f(y)^{-1}f^x(y) \]
it follows that \( f(xy) = f(x)f(y)[x,y;f] \).
\end{definition}

The set of distributors as \( x \) and \( y \) range over \( G \)
measures the extent to which the function \( f \) fails to be a homomorphism.
The name {\bf distributor} was chosen because it measures the extent to 
which \( f \) distributes over group multiplication, and also because, in 
these days of electronic ignition, the word is in need of recycling.  

\begin{example}
Consider the function \( \mo : G \longrightarrow G \) defined by 
\( g \mapsto g^{-1} \). Then 
\[ [x,y;\mo] = yxy^{-1}x^{-1} = [y^{-1},x^{-1}] \]
\end{example}

Hence commutators are distributors for the function \( \mo \). 
Distributors are thus generalised commutators. 

\begin{proposition} 
\label{basicproduct}
Let \( f:G \longrightarrow H \) be an arbitrary function between finite
groups. Let \( x,y,z \in G \). Then 
\[ [y,z;f][x,yz;f] = [x,y;f]^{f(z)}[xy,z;f] \]
\end{proposition}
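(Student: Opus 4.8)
The plan is to compute $f(xyz)$ in two different ways using the defining relation $f(uv) = f(u)f(v)[u,v;f]$, and then to equate the two results. The proposition is really just the statement that the two bracketings of a triple product agree, rewritten in terms of distributors; from this point of view it is the non-abelian $2$-cocycle identity satisfied by $[x,y;f]$.

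First I would bracket the product as $x(yz)$. One application of the defining relation gives $f(xyz) = f(x)f(yz)[x,yz;f]$, and a second application, now to $f(yz)$, gives
\[ f(xyz) = f(x)f(y)f(z)\,[y,z;f]\,[x,yz;f]. \]
Next I would bracket as $(xy)z$, which by the same two steps yields $f(xyz) = f(xy)f(z)[xy,z;f] = f(x)f(y)\,[x,y;f]\,f(z)\,[xy,z;f]$. The only non-formal manoeuvre here is to move the factor $f(z)$ to the left past the distributor $[x,y;f]$; since $u^v = v^{-1}uv$ in the paper's convention, we have $[x,y;f]\,f(z) = f(z)\,[x,y;f]^{f(z)}$, and hence
\[ f(xyz) = f(x)f(y)f(z)\,[x,y;f]^{f(z)}\,[xy,z;f]. \]

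Finally I would equate the two expressions for $f(xyz)$ and left-cancel the common prefix $f(x)f(y)f(z)$, which leaves precisely $[y,z;f][x,yz;f] = [x,y;f]^{f(z)}[xy,z;f]$. I do not expect any genuine obstacle: the entire argument is bookkeeping with the defining relation, and the one place that requires care is getting the direction of conjugation right when sliding $f(z)$ across $[x,y;f]$. As a sanity check one can specialise to $f = \mo$, where $[x,y;\mo]$ is a commutator and the identity reduces to a classical commutator relation; and restricting to functions into an abelian group collapses the conjugation and recovers the usual additive $2$-cocycle condition on $(x,y)\mapsto[x,y;f]$, which is presumably the bridge to group cohomology promised in the introduction.
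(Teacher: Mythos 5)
Your proposal is correct and is essentially the paper's own proof: both expand \( f(xyz) \) via the two bracketings \( x(yz) \) and \( (xy)z \) using \( f(uv)=f(u)f(v)[u,v;f] \) and cancel the common prefix \( f(x)f(y)f(z) \). Your explicit handling of sliding \( f(z) \) past \( [x,y;f] \) via \( [x,y;f]f(z)=f(z)[x,y;f]^{f(z)} \) is exactly the step the paper leaves implicit, and you have the direction of conjugation right.
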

\begin{proof}
Expand \( f(xyz) \) in two different ways to obtain
\[ f(xyz) = f(x)f(yz)[x,yz;f] = f(x)f(y)f(z)[y,z;f][x,yz;f] \]
and also 
\[ f(xyz) = f(xy)f(z)[xy,z;f] = f(x)f(y)[x,y;f]f(z)[xy,z;f] \]
and the result follows.
\end{proof}

This identity is similar to the cocycle identity. Note that when
the function \( f \) is a coset traversal, the distributor is precisely 
the associated factor set. So distributors also can be viewed as a 
generalisation of cocycles and factor sets.

Proposition~\ref{basicproduct} can be used to shift a product in the first
component to a product in the second component. It is also possible to shift 
such a product into the function.

\begin{proposition}
\label{actionproduct} 
Let \( f:G \longrightarrow H \) be an arbitrary function between finite
groups. Let \( x,y,z \in G \). Then 
\[ [xy,z;f] = [x,z;f][y,z;f^x] \] 
\end{proposition}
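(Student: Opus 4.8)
The plan is to reduce everything to the second form of the distributor recorded in the definition, namely $[x,y;f] = f(y)^{-1}f^x(y)$, and to use the composition law $(f^a)^b = f^{ab}$ for the function action noted just after the definition of function conjugation. Those two facts alone should make the identity drop out after a one-line cancellation, so no serious computation is needed.

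First I would rewrite each of the three distributors in the statement in the form $[u,v;g] = g(v)^{-1}g^u(v)$, being careful which function plays the role of $g$ in each case. The left-hand side becomes $[xy,z;f] = f(z)^{-1}f^{xy}(z)$, and since $f^{xy} = (f^x)^y$ this equals $f(z)^{-1}(f^x)^y(z)$. On the right, $[x,z;f] = f(z)^{-1}f^x(z)$ directly, while for the last factor we apply the same form with $f^x$ in place of $f$, giving $[y,z;f^x] = f^x(z)^{-1}(f^x)^y(z)$.

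Then I would just multiply the two right-hand factors: $[x,z;f]\,[y,z;f^x] = f(z)^{-1}f^x(z)\cdot f^x(z)^{-1}(f^x)^y(z)$, and the central pair $f^x(z)\,f^x(z)^{-1}$ cancels, leaving $f(z)^{-1}(f^x)^y(z)$, which is exactly the expression we obtained for $[xy,z;f]$. This proves the identity.

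I do not expect a real obstacle here; the only thing to watch is bookkeeping — applying the form $[u,v;g]=g(v)^{-1}g^u(v)$ with the correct function ($f$ versus $f^x$) in each of the three slots, and invoking $(f^a)^b=f^{ab}$ at the single place where a product appears in the first argument. Should one prefer a brute-force route, expanding all terms through $[x,y;f]=f(y)^{-1}f(x)^{-1}f(xy)$ and $f^x(w)=f(x)^{-1}f(xw)$ and telescoping also works, but it is messier and less illuminating. It is worth remarking in passing that this identity is the distributor counterpart of the commutator law $[xy,z]=[x,z]^y[y,z]$, with the conjugation that twists one factor there replaced here by the function action $f\mapsto f^x$ on the second distributor.
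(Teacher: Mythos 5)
Your proof is correct, and it takes a cleaner route than the paper's. The paper proves the identity by brute force: it expands $[xy,z;f]=f(z)^{-1}f(xy)^{-1}f(xyz)$ and writes out $[x,z;f][y,z;f^x]$ entirely in terms of $f$, using $f^x(w)=f(x)^{-1}f(xw)$ in each factor, then cancels — exactly the ``messier'' alternative you mention at the end. You instead factor the computation through two facts already recorded earlier in the paper: the alternate form $[u,v;g]=g(v)^{-1}g^u(v)$ of the distributor and the composition law $(f^a)^b=f^{ab}$ for the function action, after which the identity is a one-line telescoping cancellation of $f^x(z)\,f^x(z)^{-1}$. What your version buys is conceptual transparency: it exhibits the proposition as an immediate consequence of the cocycle-like structure of the function action rather than an accidental cancellation, and it makes the analogy you draw with the commutator identity $[xy,z]=[x,z]^y[y,z]$ visible in the proof itself. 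What the paper's version buys is self-containedness at this point in the text — it relies only on the definition, whereas yours leans on the claim $(f^a)^b=f^{ab}$, which the paper asserts as ``easy to verify'' without proof; if you were writing this up you should either cite that remark explicitly or include its two-line verification so the argument is not circularly deferring the same expansion.
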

\begin{proof}
Just expand out each side.
\[
 [xy,z;f] = f(z)^{-1}f(xy)^{-1}f(xyz)
\]
and also 
\[ \begin{array}{l}
[x,z;f][y,z;f^x] = f(z)^{-1}f(x)^{-1}f(xz)f^x(z)^{-1}f^x(y)^{-1}f^x(yz) \\[3mm]
\mbox{\hskip2em} 
                = f(z)^{-1}f(x)^{-1}f(xz)
			\left(f(x)^{-1}f(xz)\right)^{-1}
			\left(f(x)^{-1}f(xy)\right)^{-1}
			\left(f(x)^{-1}f(xyz)\right) \\[1mm]
\mbox{\hskip2em} 
                 = f(z)^{-1} f(xy)^{-1} f(xyz)
\end{array} \]
the result follows
\end{proof}

If \( a \in G \) then taking a distributor with \( a \) defines an operator 
\( D_a \) on the functions from \( G \) to \( H \) given by

\[ D_af (x) = [x,a;f] \]

Proposition~\ref{actionproduct} now tells us that \( (D_af)^g = D_af^g \).
So this proposition essentially states that these distributor operators 
commute with the conjugation action on functions.

\medskip

The set of commutators generates the derived subgroup, the smallest normal
subgroup whose quotient is abelian. We now make a similar observation about
distributors.

Let \( f : G \longrightarrow H \). Let \( f(G) \) denote 
\( \langle f(g)\mid g \in G \rangle \)  and let 
\( [G,G;f] = \langle [x,y;f]\mid x,y \in G \rangle \). 

\begin{proposition}
With the notation above \( [G,G;f] \unlhd f(G) \) and if 
\( \pi \) denotes the projection map onto the quotient then
\( \pi f \) is a homomorphism.

Furthermore if \( K \unlhd f(G) \) is such that \( \pi f \) is a homomorphism
where \( \pi \) is the projection map onto \( f(G)/K \), then
\( [G,G;f] \le K \).
\end{proposition}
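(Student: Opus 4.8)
The plan is to handle the two halves separately. Both come down to the defining relation \( f(xy) = f(x)f(y)[x,y;f] \), together with Proposition~\ref{basicproduct} for the normality claim.

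For the first half I would start by observing that every distributor \( [x,y;f] = f(y)^{-1}f(x)^{-1}f(xy) \) visibly lies in \( f(G) \), so \( [G,G;f] \le f(G) \) at once. For normality, I would rearrange the identity of Proposition~\ref{basicproduct} into
\[ [x,y;f]^{f(z)} = [y,z;f]\,[x,yz;f]\,[xy,z;f]^{-1}, \]
so that conjugating a generating distributor by a generator \( f(z) \) of \( f(G) \) produces a product of distributors, hence an element of \( [G,G;f] \). This only gives \( f(z)^{-1}[G,G;f]f(z) \subseteq [G,G;f] \) for each \( z \); I would then invoke finiteness — conjugation by \( f(z) \) is an injective endomorphism of the finite group \( [G,G;f] \), hence an automorphism of it — to upgrade this to equality, and likewise for conjugation by \( f(z)^{-1} \). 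Since the \( f(z) \) generate \( f(G) \) this yields \( [G,G;f] \unlhd f(G) \). Finally, with \( \pi \) the quotient map \( f(G) \to f(G)/[G,G;f] \), applying \( \pi \) to \( f(xy) = f(x)f(y)[x,y;f] \) and using \( \pi([x,y;f]) = 1 \) shows \( \pi f \) is a homomorphism.

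For the second half, suppose \( K \unlhd f(G) \) and \( \pi : f(G) \to f(G)/K \) makes \( \pi f \) a homomorphism. I would apply \( \pi \) to \( f(xy) = f(x)f(y)[x,y;f] \) (noting \( [x,y;f] \in f(G) \), so this makes sense), obtaining \( (\pi f)(xy) = (\pi f)(x)(\pi f)(y)\,\pi([x,y;f]) \); comparing with the homomorphism property forces \( \pi([x,y;f]) = 1 \), i.e.\ \( [x,y;f] \in K \) for all \( x,y \). As \( K \) is a subgroup it then contains the subgroup generated by all distributors, namely \( [G,G;f] \).

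The one genuinely delicate point is the step from the one-sided inclusion \( f(z)^{-1}[G,G;f]f(z) \subseteq [G,G;f] \) to full normality: Proposition~\ref{basicproduct} controls conjugation only by the values \( f(z) \), not their inverses, so some extra input is needed — finiteness is the cleanest, though one could instead track \( f(z)^{-1} \) through the distributor identities directly. Everything else is bookkeeping with the defining relation.
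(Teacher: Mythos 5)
Your proof is correct and follows essentially the same route as the paper: the rearranged identity of Proposition~\ref{basicproduct}, \( [x,y;f]^{f(z)} = [y,z;f][x,yz;f][xy,z;f]^{-1} \), gives normality, and applying \( \pi \) to \( f(xy) = f(x)f(y)[x,y;f] \) settles both the forward claim and the converse. The only difference is that you explicitly close the gap between the one-sided inclusion \( f(z)^{-1}[G,G;f]f(z) \subseteq [G,G;f] \) and full normality by invoking finiteness, a point the paper passes over silently; this is extra care rather than a divergence.
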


\begin{proof}
Obviously \( [G,G;f] \le f(G) \). To prove normality we use  
proposition~\ref{basicproduct} which gives

\[ [x,y;f]^{f(z)} = [y,z;f][x,yz;f][xy,z;f]^{-1} \]

The rest follows by observing that 
\( \pi f(xy) = \pi f(x) \pi f (y) \pi ([x,y;f]) \) and hence \( \pi f \) is 
a homomorphism if and only if \( [x,y;f] \) always lies in the kernel of 
\( \pi \).
\end{proof}

This gives us another quite obvious method of constructing homomorphisms

\begin{method}
Given any function \( f : G \longrightarrow H \) we may construct a 
homomorphism into the group \( f(G)/[G,G;f] \).
\end{method}

While this method allows to us easily construct homomorphisms out of 
\( G \) unfortunately we cannot easily specify the group into which 
the homomorphism will map. In particular it is often difficult to ensure 
that a homomorphism constructed in this fashion is not trivial.
This is a rather severe limitation on this method as a potential tool in 
proving group theoretic results.  

\section{The Distributed Average and Schur-Zassenhaus}

In section~\ref{average_function_section} we built a homomorphism from
an arbitrary function \( f \) by averaging out the effects of the function
action. In order to be able to take an average we required that \( f \) map 
into  an abelian group, which is a rather strong restriction on our ability 
to apply the method.  

However if \( f \) is already close to being a homomorphism, we shouldn't 
need to adjust the part of \( f \) that is already behaving itself. 
It is only the part that is misbehaving that needs to be averaged.  
The distributor \( [a,x;f] = f(x)^{-1}f^a(x) \) describes the difference 
between the function \( f \) and its conjugate \( f^a \) and hence describes 
only this misbehaving part of the function.

Instead of averaging the entire function perhaps we should try averaging the
distributors. By combining the average distributor with \( f \) we may hope to
obtain a homomorphism. And since we are only averaging distributors we will 
only need \( [G,G;f] \) to be abelian which in most cases will be a weaker 
restriction. 

\begin{method} Combine the average distributor with the function to obtain
a homomorphism.
\end{method}

Let \( f : G \longrightarrow H \) be an arbitrary function and assume that
\( [G,G;f] \) is abelian. Let \( \{ f^{a_i} \} \) be the set of all 
conjugates of \( f \) and fix elements \( \{ a_i: 1 = 1 \ldots n \} \)
giving these conjugates. Note that \( \{ a_i \} \) is a set of coset
representatives for \( \Stab_G(f) \) in \( G \).  Consider

\[ \prod_i \left(f(x)^{-1}f^{a_i}(x)\right) = \prod_i [a_i,x;f] \]

This is not really the average distributor. It is really just the product 
of all the distributors and will be the \( n \)th power of what we might 
think of as the true average where \( n = [G:\Stab_G(f)] \).

Sometimes a product is good enough. Indeed we used a product rather than a 
true average to define the average function and obtain the transfer map. 
In this case however we need a true average since we plan to recombine 
the result with the original function \( f \). To obtain a true average we 
must take an \( n \)th root which is only possible if the number of conjugates 
\( n = [G:\Stab_G(f)] \) of \( f \) is relatively prime to \( |[G,G;f]| \). 
Under this extra assumption we may find a number \( m \) with 
\( mn = 1 \pmod{|[G,G;f]|} \) and the true average distributor will then be 

\[ d(x) = \left( \prod_i [a_i,x;f] \right)^m \]

We recombine this with \( f \) to form the new function

\[ \overline{\overline{f}}(x) = f(x)d(x) \]

which we will call the {\bf distributed average} of \( f \). Note that if 
\( f \) is a homomorphism then clearly \( \overline{\overline{f}}(x) = f(x) \) 
as we would expect.

\medskip
Before continuing we address a small technical matter. The definition of
the distributed average above depends on \( \Stab_G(f) \le G \) and on 
\( [G,G;f] \le H \). However in practice we may not know enough about the 
function \( f \) to explicitly determine these subgroups. Our next Lemma 
shows that the distributed average can be computed without determining 
these subgroups explicitly.

\begin{lemma}
\label{TechnicalLemma} Let \( f : G \longrightarrow H \) be a function. Let 
\( [G,G;f] \le A \) where \( A \) is abelian, and let \( K \le \Stab_G(f) \)
with \( \mbox{gcd}([G:K],|A|) = 1 \). Let \( m.[G:K] = 1 \pmod{|A|} \). 
Choose  a set \( \{ a_i \} \) of coset representatives for \( K \) in 
\( G \). Then

\[ \overline{\overline{f}}(x) = f(x) \left( \prod_i [a_i,x;f] \right)^m \]

and is insensitive to the choice of subgroups \( K \) and \( A \) 
and the choice of coset representatives.
\end{lemma}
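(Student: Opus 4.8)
The plan is to reduce everything to one group-theoretic identity and then do careful bookkeeping with exponents. Throughout, write $P_K(x)=\prod_i [a_i,x;f]$ for a right transversal $\{a_i\}$ of $K$ in $G$; since every factor lies in $[G,G;f]\le A$ and $A$ is abelian, this is a well-defined element of $A$, independent of the order of multiplication, and raising it to an integer power is meaningful. The heart of the matter is the following: for $s\in\Stab_G(f)$ and any $a,x\in G$, Proposition~\ref{actionproduct} applied to the triple $s,a,x$ gives $[sa,x;f]=[s,x;f]\,[a,x;f^s]$; but $f^s=f$, so $[a,x;f^s]=[a,x;f]$ and $[s,x;f]=f(x)^{-1}f^s(x)=f(x)^{-1}f(x)=1$, whence $[sa,x;f]=[a,x;f]$. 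Thus $[a,x;f]$ depends on $a$ only through the coset $\Stab_G(f)\,a$, hence a fortiori only through $Ka$ for any $K\le\Stab_G(f)$. Replacing a transversal $\{a_i\}$ by $\{k_ia_i\}$ with $k_i\in K$ therefore leaves each factor, and so $P_K(x)$, unchanged: the formula is independent of the choice of coset representatives.

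Next I would handle the choice of $K$. Let $K\le L\le\Stab_G(f)$ and write a right transversal of $K$ in $G$ as $\{b_ja_i\}$, where $\{a_i\}$ is a right transversal of $L$ in $G$ and $\{b_j\}$ one of $K$ in $L$. Applying the identity above with $s=b_j\in L$ gives $[b_ja_i,x;f]=[a_i,x;f]$, so $P_K(x)=\prod_{i,j}[a_i,x;f]=P_L(x)^{[L:K]}$ by abelianness. Now bring in the exponents: if $m_K[G:K]\equiv 1$ and $m_L[G:L]\equiv 1\pmod{|A|}$, then $[G:K]=[G:L]\,[L:K]$ gives $m_K[G:L][L:K]\equiv m_L[G:L]\pmod{|A|}$, and since $[G:L]\mid[G:K]$ is coprime to $|A|$ we may cancel it to get $m_K[L:K]\equiv m_L\pmod{|A|}$. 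As $P_L(x)$ has order dividing $|A|$, this yields $P_K(x)^{m_K}=P_L(x)^{[L:K]m_K}=P_L(x)^{m_L}$. In particular every admissible $K$ gives the same value of the formula as $K=\Stab_G(f)$, which is itself admissible because $[G:\Stab_G(f)]\mid[G:K]$ is coprime to $|A|$.

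Finally I would dispatch the choice of $A$ and assemble. For fixed $K$, if $B\le A$ are both abelian and contain $[G,G;f]$, then $m_A[G:K]\equiv 1\pmod{|A|}$ forces the same congruence mod $|B|$ (as $|B|\mid|A|$), so $m_A\equiv m_B\pmod{|B|}$ after cancelling $[G:K]$; since $P_K(x)\in[G,G;f]\le B$ has order dividing $|B|$, the $m$-th powers agree. Given two fully admissible choices $(K_1,A_1)$ and $(K_2,A_2)$, I would first replace $A_1,A_2$ by $A_1\cap A_2$ — abelian, containing $[G,G;f]$, and of order dividing each $|A_i|$, so the coprimality hypotheses persist — and then replace each $K_i$ by $\Stab_G(f)$ as above; the two computations now literally coincide. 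It remains to note that for the canonical choice $K=\Stab_G(f)$, $A=[G,G;f]$ the $a_i$ are exactly the elements producing the distinct conjugates $f^{a_i}$, so $\prod_i[a_i,x;f]=\prod_i\bigl(f(x)^{-1}f^{a_i}(x)\bigr)$ with $[G:K]=n$ and $|A|=|[G,G;f]|$, matching the original definition of $\overline{\overline{f}}$; hence every admissible computation returns $\overline{\overline{f}}(x)$.

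The main difficulty I anticipate is not any deep step but keeping the divisibility and coprimality hypotheses coherent through the two independent reductions — first shrinking $A$ to $A_1\cap A_2$, then enlarging $K$ to $\Stab_G(f)$ — and repeatedly using the principle that an exponent matters only modulo the order of the element it acts on, which divides the ambient group order. The single algebraic ingredient, the identity $[sa,x;f]=[a,x;f]$ for $s\in\Stab_G(f)$, falls straight out of Proposition~\ref{actionproduct}.
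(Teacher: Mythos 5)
Your proof is correct and follows essentially the same route as the paper's: the distributor $[a,x;f]$ depends on $a$ only through its coset of $\Stab_G(f)$ (so representatives don't matter), shrinking $K$ merely raises the product to the power $[\Stab_G(f):K]$, which is absorbed into $m$, and $A$ enters only through $m$ modulo the order of elements of $[G,G;f]$. You simply make the paper's terse exponent bookkeeping explicit via the congruence $m_K[L:K]\equiv m_L \pmod{|A|}$, which is a welcome but not essentially different elaboration.
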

 
\begin{proof}
The choice of \( A \) only influences the choice of \( m \) via the equation
\( m.[G:K] = 1 \pmod{|A|} \). But any such \( m \) also satisfies
\( m.[G:K] = 1 \pmod{|[G,G;f]|} \). Note that \( m \) is applied as a power
to elements of \( [G,G;f] \). So any number \( m \) satisfying this condition 
will give the same result.

The coset representatives only enter into the definition in the terms
\( [a_i,x;f] = f(x)^{-1}f^{a_i}(x) \). But \( f^{a_i} = f^{a'_i} \) 
whenever \( a_i \) and \( a'_i \) belong to the same coset of 
\( \Stab_G(f) \) (which will definitely be true if they belong to 
the same coset of \( K \)). Hence the definition does not depend on
the choice of \( \{ a_i \} \). 

Now each coset of \( \Stab_G(f) \) consists of \( [\Stab_G(f):K] \)
cosets of \( K \). So the effect of using \( K \) instead of \( \Stab_G(f) \)
in the definition is to simply apply a power of \( [\Stab_G(f):K] \) to the 
product of distributors. But since \( [\Stab_G(f):K] \) is relatively prime 
to \( |A| \) this extra power will be taken account of in our choice of 
the power \( m \) and the result will remain the same. 

Hence \( \overline{\overline{f}} \) is well defined and insensitive to 
the choice of the subgroups \( A \) and \( K \).
\end{proof}

\begin{theorem}\label{DAhomomorphism}
 With the notation and under the conditions discussed above, 
the distributed average \( \overline{\overline{f}} \) is a homomorphism from \( G \) 
to \( H \).
\end{theorem}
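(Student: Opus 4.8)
The plan is to verify the homomorphism property directly, by isolating a single cocycle-type identity for the ``average distributor''. Write $c(x) = \prod_i [a_i,x;f]$ and $d(x) = c(x)^m$, so that $\overline{\overline{f}}(x) = f(x)\,d(x)$ with $d(x) \in [G,G;f]$. Since $f(xy) = f(x)f(y)[x,y;f]$ and $f(y)\,d(x)^{f(y)} = d(x)\,f(y)$, the identity $\overline{\overline{f}}(xy) = \overline{\overline{f}}(x)\overline{\overline{f}}(y)$ is equivalent to
\[ d(xy) = [x,y;f]^{-1}\, d(x)^{f(y)}\, d(y), \]
so the whole proof reduces to establishing this one identity. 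Throughout I will use that $[G,G;f]$ is abelian and, by the proposition asserting $[G,G;f] \unlhd f(G)$, that conjugation by any $f(z)$ restricts to an automorphism of $[G,G;f]$; in particular it commutes with taking powers, and all of $c(x)$, $d(x)$, $[x,y;f]$ and their $f(z)$-conjugates lie in this one abelian group. By Lemma~\ref{TechnicalLemma} the quantity $\overline{\overline{f}}$ is insensitive to the permissible choices of $K$ and $A$, so I will keep them general (one could specialise to $K = \Stab_G(f)$ if that is more comfortable).

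The engine will be Proposition~\ref{basicproduct} applied to the triple $(a_i,x,y)$, which rearranges to $[a_i,xy;f] = [x,y;f]^{-1}[a_i,x;f]^{f(y)}[a_i x,y;f]$. Taking the product over the $n = [G:K]$ representatives and using commutativity to collect the $[x,y;f]^{-1}$ factors and to pull the inner automorphism $g \mapsto g^{f(y)}$ through a product, I get
\[ c(xy) = [x,y;f]^{-n}\, c(x)^{f(y)} \prod_i [a_i x,y;f]. \]

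The crux is to recognise $\prod_i [a_i x,y;f]$ as $c(y)$. I will rewrite $[a_i x,y;f] = f(y)^{-1} f^{a_i x}(y) = f(y)^{-1}(f^{a_i})^x(y)$ using $(f^a)^b = f^{ab}$. As noted in the proof of Lemma~\ref{TechnicalLemma}, the multiset $\{f^{a_i}\}_{i=1}^n$ consists of each element of the orbit of $f$ repeated $[\Stab_G(f):K]$ times, while $g \mapsto g^x$ is a bijection of that orbit onto itself; hence $\{(f^{a_i})^x\}_{i}$ is the same multiset, so there is a permutation $\sigma$ of $\{1,\dots,n\}$ with $(f^{a_i})^x = f^{a_{\sigma(i)}}$ and therefore $[a_i x,y;f] = [a_{\sigma(i)},y;f]$. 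Reordering the commuting factors then gives $\prod_i [a_i x,y;f] = \prod_i [a_i,y;f] = c(y)$. I expect this to be the step needing the most care: right translation by $x$ need not permute the cosets of $K$, so one must argue at the level of the functions $f^{a_i}$ rather than of the representatives $a_i$.

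Finally I will raise $c(xy) = [x,y;f]^{-n}\, c(x)^{f(y)}\, c(y)$ to the $m$th power. Every element in sight has order dividing $|[G,G;f]|$, which divides $|A|$, and $mn \equiv 1 \pmod{|A|}$, so $g^{mn} = g$ for each such $g$; hence $[x,y;f]^{-nm} = [x,y;f]^{-1}$, while $(c(x)^{f(y)})^m = (c(x)^m)^{f(y)}$ because conjugation by $f(y)$ commutes with $m$th powers. This yields $d(xy) = [x,y;f]^{-1}\, d(x)^{f(y)}\, d(y)$, the identity isolated in the first paragraph, and the theorem follows. (As a by-product, taking $x=y=1$ recovers $\overline{\overline{f}}(1)=1$, consistent with $\overline{\overline{f}}$ being a homomorphism.)
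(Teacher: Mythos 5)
Your proposal is correct and takes essentially the paper's own route: expand \( [a_i,xy;f] \) via Proposition~\ref{basicproduct}, use abelianness of \( [G,G;f] \) together with its normality in \( f(G) \) to collect factors and pull conjugation by \( f(y) \) through, and use \( nm \equiv 1 \pmod{|A|} \) to reduce \( [x,y;f]^{-nm} \) to \( [x,y;f]^{-1} \); packaging this as the identity \( d(xy) = [x,y;f]^{-1}d(x)^{f(y)}d(y) \) is only a cosmetic reorganization of the paper's displayed computation. If anything your write-up is more complete, since the step \( \prod_i [a_i x,y;f] = \prod_i [a_i,y;f] \), which the paper uses silently in passing to its final line, is justified explicitly by your orbit-multiset/permutation argument at the level of the conjugate functions \( f^{a_i} \).
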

\begin{proof} 
We directly calculate as follows
\[ \begin{array}{rl}
\overline{\overline{f}}(xy) &=
f(xy) \left( \prod_i [a_i,xy;f] \right)^m \\
&= f(x)f(y)[x,y;f] 
\left( \prod_i [x,y;f]^{-1}f(y)^{-1}[a_i,x;f]f(y)[a_ix,y;f] \right)^m \\
&= f(x)f(y)[x,y;f][x,y;f]^{-nm}
f(y)^{-1} \left( \prod_i [a_i,x;f]\right)^m f(y)
\left( \prod_i [a_ix,y;f] \right)^m \\
&= f(x)d(x)f(y)d(y) \\
&= \overline{\overline{f}}(x)\overline{\overline{f}}(y)
\end{array}
\]
\end{proof}

\begin{corollary}
\label{SZabelian}
Assume that \( f : G \longrightarrow H/A \) is a homomorphism where 
\( A \) is abelian and \( |A| \) is prime to \( |G| \). Then we may 
lift \( f \) to obtain a homomorphism into \( H \). 
\end{corollary}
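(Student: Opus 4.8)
The plan is to apply Theorem~\ref{DAhomomorphism} to an arbitrary set-theoretic lift of \( f \). First I would pick any function \( \tilde f : G \longrightarrow H \) with \( \pi\tilde f = f \), where \( \pi : H \longrightarrow H/A \) is the quotient map (so in particular \( A \unlhd H \)), and I would arrange \( \tilde f(1) = 1 \) by adjusting the value at the identity, which costs nothing since \( f(1) = 1 \).

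The key observation is that the distributors of \( \tilde f \) all lie in \( A \). Applying \( \pi \) to \( [x,y;\tilde f] = \tilde f(y)^{-1}\tilde f(x)^{-1}\tilde f(xy) \) gives \( [x,y;\pi\tilde f] = [x,y;f] \), which is trivial because \( f \) is a homomorphism. Hence \( [x,y;\tilde f] \in \ker\pi = A \) for all \( x,y \), so \( [G,G;\tilde f] \le A \) is abelian and the distributed average \( \overline{\overline{\tilde f}} \) is defined.

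To verify the hypotheses of the distributed average I would take \( K = 1 \): the trivial subgroup is automatically contained in \( \Stab_G(\tilde f) \), and \( \mbox{gcd}([G:1],|A|) = \mbox{gcd}(|G|,|A|) = 1 \) by assumption. Lemma~\ref{TechnicalLemma} then makes \( \overline{\overline{\tilde f}} \) well defined independently of the choices, and Theorem~\ref{DAhomomorphism} says it is a homomorphism \( G \longrightarrow H \). Finally, since \( \overline{\overline{\tilde f}}(x) = \tilde f(x)\,d(x) \) with \( d(x) = \bigl( \prod_i [a_i,x;\tilde f] \bigr)^m \) a product of powers of elements of the subgroup \( A \), we get \( \pi\overline{\overline{\tilde f}}(x) = \pi\tilde f(x) = f(x) \); so \( \overline{\overline{\tilde f}} \) is the required lift of \( f \).

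There is no genuinely hard step once the distributed-average machinery is in hand; the two things to get right are that the correction term \( d(x) \) stays inside \( A \) (so that \( \overline{\overline{\tilde f}} \) really does lift \( f \)), and that the coprimality hypothesis \( \mbox{gcd}(|A|,|G|)=1 \) is exactly what licenses the choice \( K = 1 \). One could instead take \( K = \Stab_G(\tilde f) \), whose index divides \( |G| \), but \( K = 1 \) makes the coprimality check immediate.
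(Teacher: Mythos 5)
Your proof is correct and follows essentially the same route as the paper: take an arbitrary set-theoretic lift of \( f \) and apply Theorem~\ref{DAhomomorphism} (via Lemma~\ref{TechnicalLemma}) to its distributed average. You simply spell out the verifications the paper leaves implicit --- that the distributors of the lift land in \( A \), that \( K=1 \) satisfies the coprimality hypothesis, and that the correction term \( d(x) \in A \) so the distributed average still projects onto \( f \) --- all of which are accurate.
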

\begin{proof}
Any choice of coset representatives defines a function 
\( \hat{f}: G \longrightarrow H \) with \( f(g) = \hat{f}A \).
The function \( \hat{f} \) satisfies the conditions of 
theorem~\ref{DAhomomorphism} and the homomorphism 
\( \overline{\overline{\hat{f}}} \) is the desired lifting of \( f \).
\end{proof}

\begin{corollary}
\label{SZsoluble}
Assume that \( f : G \longrightarrow H/N \) is a homomorphism where 
\( N \) is soluble and \( |N| \) is prime to \( |G| \). Then we may 
lift \( f \) to obtain a homomorphism into \( H \). 
\end{corollary}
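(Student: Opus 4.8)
The plan is to prove this by induction on the derived length of $N$, using Corollary~\ref{SZabelian} as both the base case and the engine of the inductive step. If $N$ has derived length $1$, i.e. $N$ is abelian, then since $|N|$ is prime to $|G|$ the hypotheses of Corollary~\ref{SZabelian} are already met and we are done immediately. So assume $N$ has derived length $d > 1$ and that the result holds for all soluble normal subgroups of smaller derived length (with $G$, $H$ allowed to vary).

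For the inductive step, first I would set $A = N^{(d-1)}$, the last nontrivial term of the derived series of $N$. This $A$ is abelian, since its derived subgroup $N^{(d)}$ is trivial, and it is characteristic in $N$; as $N \unlhd H$ this forces $A \unlhd H$. Now pass to the quotient $H/A$. The third isomorphism theorem identifies $(H/A)/(N/A)$ with $H/N$ compatibly with the quotient maps, so $f$ may be viewed as a homomorphism $G \longrightarrow (H/A)/(N/A)$. The group $N/A$ is soluble of derived length $d-1$, and $|N/A|$ divides $|N|$, hence is still prime to $|G|$. The inductive hypothesis, applied with $H$ replaced by $H/A$ and $N$ by $N/A$, therefore yields a homomorphism $f_1 : G \longrightarrow H/A$ lifting $f$. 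Finally, $A$ is an abelian normal subgroup of $H$ with $|A|$ dividing $|N|$, hence prime to $|G|$, so Corollary~\ref{SZabelian} lifts $f_1$ to a homomorphism $f_2 : G \longrightarrow H$; composing $f_2$ with the projections $H \to H/A \to H/N$ shows that $f_2$ lifts $f$, completing the induction.

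The only thing needing care is the compatibility bookkeeping --- that the composite of $f_2$ with $H \to H/A \to (H/A)/(N/A) \cong H/N$ really does return the original $f$ --- but this is just the standard behaviour of the isomorphism theorems and is not a genuine obstacle. It is worth noting what is \emph{not} needed here: this is only the existence (lifting/splitting) half of Schur--Zassenhaus, and the argument never touches conjugacy of complements, so no solvability hypothesis on $G$ is required. All the substantive work has already been done in Theorem~\ref{DAhomomorphism}, which powers Corollary~\ref{SZabelian}; the present corollary merely climbs an abelian composition series for $N$ one layer at a time.
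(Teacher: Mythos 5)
Your proposal is correct and follows essentially the same route as the paper: decompose $N$ into abelian layers and repeatedly apply Corollary~\ref{SZabelian}, which the paper phrases as factoring $H \to H/N$ through a chain of projections with abelian kernels and which you phrase as induction on derived length using $A = N^{(d-1)}$. Your version merely makes explicit the details the paper leaves implicit (normality of the derived terms in $H$ and the third-isomorphism-theorem bookkeeping).
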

\begin{proof}
Since \( N \) is soluble we may decompose the projection 
\( H \longrightarrow H/N \) into a sequence of projections

\[ H = H_0 \longrightarrow H_1 \longrightarrow H_2 
\longrightarrow \cdots \longrightarrow H_k = H/N \]
where the kernel of each projection is abelian. Repeatedly applying 
the previous corollary we may lift the function \( f \) to obtain
eventually a lifting of the homomorphism into \( H \).
\end{proof}

We could also have used Schur-Zassenhaus to prove these corollaries. Indeed 
corollary~\ref{SZabelian} is pretty much equivalent to the abelian case of
that theorem which is the most difficult part to prove. The abelian case 
of the Shur-Zassenhaus theorem is usually proved via a cohomological argument 
using cocycles and factor sets. Unwrapping the notation we find that the
usual proof is at its heart the same as the distributed average approach
presented above. However the distributed average approach is much better 
motivated and easier to understand.

\medskip

The Schur-Zassenhaus theorem also states that complements are unique up to 
conjugacy. This motivates us to look at the question of uniqueness.  

The distributed average function \( \overline{\overline{f}} \) is of course
unique for any given function \( f \), so this is not where the
question of uniqueness arises. In the proof of Corollary~\ref{SZabelian} 
however we took the distributed average of a function \( \hat{f} \) which 
was defined via an arbitrary choice of coset representatives. Hence the 
question we must address is what effect the choice of coset representatives 
(and thus the function \( \hat{f} \)) has on the distributed average. This 
leads us to consider the following situation

Suppose \( f : G \longrightarrow H \) with \( [G,G;f] \le A \) with 
\( A \unlhd H \) abelian; and assume \( K \le \Stab_G(f) \) with
\( \mbox{gcd}([G:K]:|A|) = 1 \). Let
\( a: G \longrightarrow A \) be any function with \( K \le \Stab_G(a) \).
Consider the function \( (f*a)(g) = f(g)a(g) \). We are interested in the 
relationship between \( \overline{\overline{f*a}} \) and 
\( \overline{\overline{f}} \). Now

\[ \begin{array}{rl}
[t,x;f*a] &= a(x)^{-1}f(x)^{-1}a(t)^{-1}f(t)^{-1}f(tx)a(tx) \\
          &= f(x)^{-1}a(t)^{-1}f(x)a(t)[t,x;f][t,x;a]
\end{array} \]

Hence 

\[ \begin{array}{rl}
\overline{\overline{f*a}} 
&= \displaystyle f(x)a(x)\Bigl(
	\prod_{t_iK}f(x)^{-1}a(t_i)^{-1}f(x).a(t_i).[t_i,x;f][t_i,x;a]
		\Bigr)^m  \\
&= \displaystyle f(x)
	a(x) 
	\Bigl(f(x)^{-1}A^{-1}f(x)\Bigr)
	A
	\Bigl(\prod_{t_iK}[t_i,x;f]\Bigr)^m
	\Bigl(\prod_{t_iK}[t_i,x;a]\Bigr)^m \\
&= \displaystyle f(x)
	\Bigl(f(x)^{-1}A^{-1}f(x)\Bigr)
	\Bigl(\prod_{t_iK}[t_i,x;f]\Bigr)^m
	A\,
	a(x)
	\Bigl(\prod_{t_iK}[t_i,x;a]\Bigr)^m \\
&= \displaystyle A^{-1}\overline{\overline{f}}(x) A\, \overline{\overline{a}}(x)
\end{array} \]

Where \( \displaystyle A = \Bigl(\prod_{t_iK}a(t_i)\Bigr)^m \). 

Finally observe that \( \overline{\overline{a}} \) is a homomorphism 
from \( G \) into \( A \). Since the function \( a \) is stabilised by 
elements of \( K \), \( \overline{\overline{a}} \) acts trivially on 
\( K \). Hence the kernel of \( \overline{\overline{a}} \) contains 
\( K \). So by the fundamental theorem of homomorphisms the order of
its image is a factor of \( [G:K] \) and by Lagrange it divides \( |A| \).
Since these are relatively prime we conclude that \( \overline{\overline{a}}
\) is the trivial homomorphism.

We have proved the following theorem.

\begin{theorem}
Under the conditions given above
\[  \overline{\overline{f*a}}(x)  = 
\left(\overline{\overline{f}}(x)\right)^A 
\mbox{ \  where \ }
A = \Bigl(\prod_{t_iK}a(t_i)\Bigr)^m \]
\end{theorem}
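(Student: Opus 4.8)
The plan is to carry to completion the computation displayed immediately before the theorem; the whole argument is that computation together with one short divisibility remark at the end. First I would expand the distributor of $f*a$ by substituting $(f*a)(g)=f(g)a(g)$ into $[t,x;f*a]=(f*a)(x)^{-1}(f*a)(t)^{-1}(f*a)(tx)$. Using that $A$ is abelian and that $A\unlhd H$, every $a$-value can be commuted past the $f$-values at the cost of conjugating an element of $A$ by an $f$-value, which leaves it inside $A$; the result is the factorisation $[t,x;f*a]=\bigl(f(x)^{-1}a(t)^{-1}f(x)a(t)\bigr)\,[t,x;f]\,[t,x;a]$. All three factors lie in $A$: the $a$-distributor and the correction term because $a$ takes values in $A$ (with $A\unlhd H$), and the $f$-distributor because $[G,G;f]\le A$ by hypothesis.

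Next I would insert this into the formula of Lemma~\ref{TechnicalLemma}, namely $\overline{\overline{f*a}}(x)=(f*a)(x)\bigl(\prod_{t_iK}[t_i,x;f*a]\bigr)^m$, and regroup. Because every factor inside the product lies in the abelian group $A$, the product over the coset representatives and the exponent $m$ both distribute across the three pieces; writing $A=\bigl(\prod_{t_iK}a(t_i)\bigr)^m$, the correction terms contribute $f(x)^{-1}A^{-1}f(x)$, the $f$-distributors contribute $\bigl(\prod_{t_iK}[t_i,x;f]\bigr)^m$, and the $a$-distributors contribute $\bigl(\prod_{t_iK}[t_i,x;a]\bigr)^m$. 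These all commute, being in $A$; the only element not in $A$ is the single leading $f(x)$ coming from $(f*a)(x)=f(x)a(x)$. Reordering so that $f(x)$ absorbs the leading $f(x)^{-1}$ of $f(x)^{-1}A^{-1}f(x)$, leaving $A^{-1}$ out in front, and then reassembling $f(x)\bigl(\prod_{t_iK}[t_i,x;f]\bigr)^m$ into $\overline{\overline{f}}(x)$ and $a(x)\bigl(\prod_{t_iK}[t_i,x;a]\bigr)^m$ into $\overline{\overline{a}}(x)$, one obtains $\overline{\overline{f*a}}(x)=A^{-1}\,\overline{\overline{f}}(x)\,A\,\overline{\overline{a}}(x)=\bigl(\overline{\overline{f}}(x)\bigr)^A\,\overline{\overline{a}}(x)$.

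It remains to show $\overline{\overline{a}}(x)=1$, which is the only step that is not pure bookkeeping. By Theorem~\ref{DAhomomorphism}, $\overline{\overline{a}}$ is a homomorphism $G\to A$, and since $K\le\Stab_G(a)$ it is trivial on $K$; hence $K$ lies in its kernel and the order of its image divides $[G:K]$. That order also divides $|A|$ by Lagrange, and $\gcd([G:K],|A|)=1$ by hypothesis, so the image is trivial. Therefore $\overline{\overline{a}}$ is the trivial homomorphism, and the identity above collapses to $\overline{\overline{f*a}}(x)=\bigl(\overline{\overline{f}}(x)\bigr)^A$, which is the assertion.

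I expect the main obstacle to be the bookkeeping in the middle step: recognising that every factor produced except the single leading $f(x)$ is $A$-valued, so that the product over the $[G:K]$ representatives and the $m$-th power may be split and rearranged at will, and then verifying that the leftover $f(x)$ lands in exactly the right position to rebuild $\overline{\overline{f}}(x)$ with the flanking conjugation by $A$ intact. The normality of $A$ in $H$ is exactly what licenses moving $f$-values past $A$-valued material, which is why the hypothesis here reads ``$A\unlhd H$ abelian'' rather than merely ``$A$ abelian''; the coprimality hypothesis then does double duty, pinning down the exponent $m$ and forcing $\overline{\overline{a}}$ to vanish.
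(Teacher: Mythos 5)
Your proposal is correct and is essentially the paper's own argument: the same factorisation of $[t,x;f*a]$ into $\bigl(f(x)^{-1}a(t)^{-1}f(x)a(t)\bigr)[t,x;f][t,x;a]$ using that $A$ is abelian and normal, the same regrouping of the product over coset representatives to reach $A^{-1}\overline{\overline{f}}(x)\,A\,\overline{\overline{a}}(x)$, and the same coprimality argument (kernel contains $K$, image order divides both $[G:K]$ and $|A|$) showing $\overline{\overline{a}}$ is trivial. Nothing further is needed.
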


\begin{corollary}
The lifted functions in Corollaries~\ref{SZabelian}~and~\ref{SZsoluble} 
are unique up to conjugacy in \( N \).
\end{corollary}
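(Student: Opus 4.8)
The plan is to deduce both corollaries directly from the theorem immediately preceding, which already contains all of the real content. Start with the abelian situation of Corollary~\ref{SZabelian}, so $N = A \unlhd H$ is abelian and $|A|$ is prime to $|G|$. Let $\phi_1,\phi_2 : G \longrightarrow H$ be any two homomorphisms lifting $f$, and set $a : G \longrightarrow A$, $a(g) = \phi_1(g)^{-1}\phi_2(g)$. Since $\phi_1(g)$ and $\phi_2(g)$ have the same image $f(g)$ in $H/A$ we have $a(g)\in A$, and clearly $\phi_2 = \phi_1 * a$. Now apply the preceding theorem with $\phi_1$ in the role of $f$: as $\phi_1$ is a homomorphism, $[G,G;\phi_1] = 1 \le A$ and $\Stab_G(\phi_1) = G$, and the subgroup $K = 1$ is admissible precisely because $\gcd([G:1],|A|) = \gcd(|G|,|A|) = 1$, while $a$ is (vacuously) stabilised by $K = 1$. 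The theorem then yields $\overline{\overline{\phi_1 * a}}(x) = \bigl(\overline{\overline{\phi_1}}(x)\bigr)^{c}$ with $c = \bigl(\prod_{g\in G} a(g)\bigr)^{m} \in A$. Since homomorphisms are fixed by the distributed average, this says $\phi_2(x) = \phi_1(x)^{c}$, so the two liftings are conjugate by $c \in A = N$.

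For Corollary~\ref{SZsoluble} I would induct on $|N|$. The case $N = 1$ is trivial. If $N \neq 1$ then, as $N$ is soluble, $N' = [N,N]$ is a proper subgroup, characteristic in $N$ and hence normal in $H$; it is soluble with $|N'|$ dividing $|N|$, while $N/N'$ is abelian and normal in $H/N'$, all relevant orders remaining prime to $|G|$. Given two liftings $\phi_1,\phi_2 : G \longrightarrow H$ of $f$, compose with $H \longrightarrow H/N'$ to obtain liftings $\overline{\phi_1},\overline{\phi_2}$ of the induced homomorphism $G \longrightarrow (H/N')/(N/N')$; by the abelian case just proved there is $\overline{c} \in N/N'$ with $\overline{\phi_2} = \overline{\phi_1}^{\,\overline{c}}$. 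Choose a preimage $c \in N$ of $\overline{c}$ and replace $\phi_1$ by $\phi_1^{c}$, which is again a lifting of $f$ since conjugation by $c \in N$ does not change images in $H/N$. Now $\phi_1$ and $\phi_2$ agree modulo $N'$, i.e.\ both lift a single homomorphism $G \longrightarrow H/N'$, so the inductive hypothesis applied with $N'$ in place of $N$ produces $c' \in N'$ with $\phi_2 = \phi_1^{c'}$. Hence the original $\phi_1,\phi_2$ are conjugate by $cc' \in N$.

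I do not expect a serious obstacle: the preceding theorem is doing the heavy lifting and what remains is bookkeeping. The two points that need care are (i) recognising that two arbitrary liftings differ by multiplication by an $A$-valued (resp.\ $N$-valued) function, which is where normality of $A$ in $H$ is used, together with the verification that $K = 1$ is a legitimate choice in the theorem, which is exactly the point at which the coprimality hypothesis enters; and (ii) in the soluble case, lifting the conjugating element from $N/N'$ back to $N$ and checking that conjugating a lifting by an element of $N$ again yields a lifting of $f$. Everything else is the straightforward induction up the derived series of $N$.
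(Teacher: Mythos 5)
Your proposal is correct and follows essentially the same route as the paper: in the abelian case you write the two liftings as $f$ and $f*a$ with $a$ taking values in $A$, apply the preceding theorem (using that homomorphisms equal their own distributed averages) to get conjugacy by the element $\bigl(\prod a(t_i)\bigr)^m \in A$, and handle the soluble case by induction on $|N|$. The only difference is that you spell out the inductive step (via the derived subgroup and lifting the conjugating element) which the paper leaves as a one-line remark.
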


\begin{proof} Let \( f : G \longrightarrow H/A \) be a homomorphism and
let \( f_i : G \longrightarrow H \) for \( i = 1,2\) be homomorphisms which 
lift \( f \). Then \( f_1=f_2*a \) for some function 
\( a : G \longrightarrow A \). As they are homomorphisms we have 
\( f_i = \overline{\overline{f_i}} \) and the previous theorem then 
tells us that \( f_1(x) =\left(f_2(x)\right)^A \) where 
\( A = \left(\prod_{t_iK}a(t_i)\right)^m \) proving the result for 
Corollary~\ref{SZabelian}.

The result for Corollary~\ref{SZsoluble} follows by induction on the order
of the soluble normal subgroup \( N \).
\end{proof}

\section{Conclusion}

Arbitrary function theory has potential as a unifying concept in group theory.
Proofs approached in this manner are in some cases more direct, more obviously 
motivated, and simpler to understand than proofs using concepts such as 
cohomology which have been imported into group theory from elsewhere in 
mathematics. The question of whether this approach could be used in proofs
currently requiring representation theory is worthy of investigation.

\end{document}